\newcommand{\flo}[1]{\left\lfloor\frac{#1}{2}\right\rfloor}
\newcommand{\Hess}{\operatorname{Hess}}
\newcommand{\Ann}{\operatorname{Ann}}
\newcommand{\C}{{\mathbb C}}
\newcommand{\R}{{\mathbb R}}
\renewcommand{\P}{{\mathbb P}}
\theoremstyle{theorem}
\newtheorem{theorem}{Theorem}
\newtheorem{lemma}{Lemma}[section]
\newtheorem{corollary}{Corollary}[section]
\newtheorem{example}{Example}[section]
\newtheorem{remark}{Remark}[section]
\newtheorem{fact}{Fact}[section]
\begin{document}
	\title[]{A Hessian criterion for totally nonnegative Toeplitz matrices and a theorem of Cattani}
	%\author{Anthony Iarrobino}
	\author{Chris McDaniel}
	%\address{Dept. of Math. and Stat.\\
		%University of Massachusetts\\
		%Amherst, MA 01003}
	%\email{mcdaniel@math.umass.edu}
	%\date{22-April 2015}
	\maketitle
	
	\begin{abstract}
		To a homogeneous real bivariate form we associate a family of Toeplitz matrices, as well as a family of auxiliary homogeneous real bivariate forms called higher Hessian polynomials.  We show that for a given form, certain positivity properties of its higher Hessian polynomials imply total nonnegativity of its Toeplitz matrices.  One of our main result turns out to be equivalent to a special case of a deep theorem from Hodge theory due to E. Cattani.  Our other result uses a recent theorem of S. Karp and K. Purbhoo related to problems in real Schubert calculus. 
	\end{abstract} 
	
	\section{Introduction}
	Let $F=F(X,Y)\in\R[X,Y]_d$ be a homogeneous real bivariate form, of some fixed degree $d$.  For each $i$ satisfying $0\leq i\leq d$, its $i^{th}$ Toeplitz matrix is denoted by $\phi^i_d(F)$, and its $i^{th}$ Hessian polynomial is denoted by $H^F_i=H^F_i(X,Y)$.  The Sperner number of $F$, denoted by $s(F)$, is defined to be the maximal value of the Hilbert function of a certain graded algebra associated to $F$ (\Cref{eq:SpernerDef}), although it can also be characterized as the maximum rank of the Toeplitz matrices $\phi^i_d(F)$ (\Cref{rem:Sperner3}), or equivalently, as the smallest index $i$ for which the Hessian polynomial $H_{i}^F$ is identically zero (\Cref{rem:Hess}).  We prove the following results.
	\begin{theorem}
		\label{thm:IntroCattani}
	For any $F\in\R[X,Y]_d$ with Sperner number $s=s(F)$, we have the following equivalences:
	\begin{enumerate}
		\item $\phi^{s-1}_d(F)$ is totally positive if and only if 
		$$H^F_i(X,Y)>0, \ \begin{cases} \forall (X,Y)\geq 0, \ (X,Y)\neq (0,0)\\ \forall 0\leq i\leq s-1\\ \end{cases}$$ 
		
		\item $\phi^{s-1}_d(F)$ is totally nonnegative if and only if $$H^F_i(X,Y)>0, \ \begin{cases} \forall (X,Y)>0\\ \forall 0\leq i\leq s-1\\ \end{cases}$$ 
	\end{enumerate}
	\end{theorem}
	
	\begin{theorem}
		\label{thm:IntroKarp}
	For any $F\in\R[X,Y]_d$ with Sperner number $s(F)$ and for any positive integer $r$ satisfying $1\leq r\leq s(F)$, we have the following implications:
	\begin{enumerate}
		\item $\phi^{r-1}_d(F)$ is totally positive if \begin{align*}
			H^F_i(X,Y)>0, & \ \ \begin{cases}\forall (X,Y)\geq 0, \ (X,Y)\neq (0,0)\\ \forall 0\leq i\leq r-1\\ \end{cases}, \ \text{and}\\
			H^F_{r-1}(X,1) & \ \ \text{has only real negative roots}.
		\end{align*}

		\item $\phi^{r-1}_d(F)$ is totally nonnegative if 
		\begin{align*}
			H^F_i(X,Y)>0, & \ \ \begin{cases}\forall (X,Y)>0\\ \forall 0\leq i\leq r-1\\ \end{cases}, \ \text{and}\\
			H^F_{r-1}(X,1) & \ \ \text{has only real nonpositive roots}.
		\end{align*}
		
	\end{enumerate}
	\end{theorem}
	\Cref{thm:IntroCattani} gives a necessary and sufficient condition for the total positivity/nonnegativity of the Toeplitz matrix $\phi^{s-1}_d(F)$, and the assumption that $s=s(F)$, the Sperner number of $F$, here is crucial.  \Cref{thm:IntroKarp} drops that assumption on the Sperner number, but it only gives a sufficient condition for total positivity/nonnegativity.  
	%It turns out that \Cref{thm:IntroCattani} is equivalent to a special case of a result of E. Cattani from Hodge theory.  On the other hand, \Cref{thm:IntroCattani} is also analogous to a theorem of S. Karp on totally positive flags.  Specifically, one can show that \Cref{thm:IntroCattani} is equivalent to a special case of a deep theorem of E. Cattani on variations of Hodge structure.  Alternatively, one can also use ideas from Schubert calculus   
	
	According to the theory of Macaulay duality, every homogeneous real form $F$ (in any number of variables) defines a graded oriented Artinian Gorenstein $\R$-algebra $A_F$.  The algebra $A_F$ is an algebraic model of the cohomology algebra (in even degrees) of a compact complex manifold, in the sense that it is a finite dimensional graded vector space that satisfies an analogue of Poincar\'e duality.  In the case of a K\"ahler manifold, its cohomology algebra is known to satisfy the ordinary Hodge-Riemann relations (ordinary HRR), which describes the signature of the multiplication maps between Poincar\'e dual graded components defined by powers of a single ``linear form'' in its K\"ahler cone.  Similarly, the mixed Hodge-Riemann relations (mixed HRR) describe the signature of multiplication by products of several, possibly distinct, K\"ahler forms.  The ordinary and mixed HRR conditions make sense for a general algebra $A_F$ and linear forms in an arbitrary subset $U$, and one can ask for which $F$ and $U$ do either or both of these conditions hold?  In general, the mixed HRR on an arbitrary subset $U$ implies the ordinary HRR on $U$, but not conversely.  On the other hand, a theorem of E. Cattani \cite{Cattani} implies that if $U$ is a convex cone, e.g. the K\"ahler cone, then the ordinary HRR on $U$ is equivalent to the mixed HRR on $U$.
	In a paper \cite{MMS,Errata} with P. Macias Marques and A. Seceleanu, we showed that in the two variable case, where $F=F(X,Y)$, \Cref{thm:IntroCattani} is equivalent to a special case of Cattani's theorem.  This is one way to prove \Cref{thm:IntroCattani}.  
	%That is, in two variables, the algebra $A_F$ satisfies ordinary HRR on a subset called the nonnegative cone, respectively the positive cone, is equivalent to the Hessian criteria in \Cref{thm:IntroCattani}(1), respectively (2), and that mixed HRR on $\overline{U}$, respectively $U$, is equivalent to the Toeplitz criteria in \Cref{thm:IntroCattani}(1), respectively (2).
	
	From another point of view, a recent result of S. Karp \cite[Theorem 1.1]{Karp} gives a Wronskian criterion for total positivity/nonnegativity in the real flag variety.  As we shall see (\Cref{fact:WronHess1}), the Wronskian and Hessian are closely related.  Taking this as our starting point, we show that the main ideas from Karp's proof can be applied to give another proof of \Cref{thm:IntroCattani}, and hence Cattani's theorem in that special two variable case.  Two key ingredients here are the Pl\"ucker expansion formula for the Hessian polynomial (\Cref{lem:Plucker}) and a topological criterion identifying the space of totally positive Toeplitz matrices as a union of connected components inside  a certain distinguished open set (\Cref{lem:ConnComp}).  The significance of the Sperner number is illustrated in \Cref{lem:S}.   
	
	In his paper \cite{Karp}, Karp also showed how his Wronskian criterion is related to some fundamental problems of real Schubert calculus, in particular, to the famous Shapiro-Shapiro conjecture, now a theorem of Mukhin, Tarasov and Varchenko \cite{MTV}.  In a recent preprint \cite{KP}, Karp and K. Purbhoo have given a proof of a totally positive version of the Mukhin-Tarasov-Varchenko theorem (\Cref{fact:KP}).  We use their result together with \Cref{fact:WronHess1} to prove an analogous result for Hessians (\Cref{cor:HessKarp}), and use this criterion to prove \Cref{thm:IntroKarp}.

	This paper is organized as follows.  In \Cref{sec:Prelim}, we define all prerequisite notions, and we prove \Cref{lem:ConnComp} and \Cref{lem:S} mentioned above.  In \Cref{sec:Proof}, we prove \Cref{thm:IntroCattani}.  In \Cref{sec:Schubert} we prove \Cref{cor:HessKarp} and \Cref{thm:IntroKarp}.  All requisite facts are stated without proofs for the sake of brevity, but precise references are given.  Examples are given throughout. %In an Appendix, \Cref{sec:ASchubert} gives a brief account of the relevant aspects of real Schubert calculus, and \Cref{sec:Karp} gives a brief overview of Karp's paper \cite{Karp}.

	\section{Hodge Theory and Proof of \Cref{thm:IntroCattani}}
	\label{sec:Prelim}
	\subsection{Artinian Gorenstein algebras and Macaulay dual generators}
	Let $R=\R[x,y]$ denote the polynomial ring in two variables over the real numbers $\R$ equipped with the standard grading, i.e. $\deg(x)=\deg(y)=1$.  Let $R_d$ denote the $\R$-vector space of homogeneous forms of degree $d$.  Let $Q=\R[X,Y]$ be another copy of the standard graded polynomial ring in two variables (distinguished from $R$ by upper case letters), and let $R$ act on $Q$ by partial differentiation, i.e. $x\circ F=\frac{\partial F}{\partial X}$, $y\circ F=\frac{\partial F}{\partial Y}$.  Given $F\in Q_d$, its annihilator ideal is the (homogeneous) ideal 
	$$\Ann(F)=\left\{f\in R \ | \ f\circ F\equiv 0\right\}$$
	and its associated \emph{standard graded oriented Artinian Gorenstein algebra} is the quotient algebra 
	$$A=A_F=\frac{R}{\Ann(F)};$$
	it is a finite dimensional graded $\R$-vector space with graded components $A_i$, and socle degree $d$, and such that multiplication defines perfect pairings between complimentary graded components, called the \emph{intersection pairing},
	$$(-,-)_i\colon A_i\times A_{d-i}\rightarrow\R, \ (\alpha,\beta)_i=\left(\alpha\beta\right)\circ F.$$
	The polynomial $F$ is called the \emph{dual generator} of $A$.  We define its \emph{normalized coefficients} $(c_0,\ldots,c_d)$ by
		$$F=\sum_{k=0}^d\binom{d}{k}c_kX^kY^{d-k}.$$
	The Hilbert function of $A_F$ is the $(d+1)$-tuple of positive integers
	$$H(A_F)=(h_0,h_1,\ldots,h_d), \ h_i=\dim_\R\left(A_i\right).$$
	The \emph{Sperner number of $F$ (or $A_F$)} is defined to be the maximum value in the Hilbert function, i.e.
	\begin{equation}
		\label{eq:SpernerDef}
		s(F)=\max\left\{h_i \ | \ 0\leq i\leq d\right\}.
	\end{equation}
	The Hilbert function is determined by the socle degree $d=\deg(F)$ and the Sperner number $s=s(F)$, given by the formula: 
	\begin{equation}
		\label{eq:Hilbert}
		H(A_F)=(1,2,\ldots,s-1,s,s,\ldots,s,s,s-1,\ldots,2,1);
	\end{equation}
	the Sperner number is also characterized as the minimum degree of a generator in the annihilator ideal $\Ann(F)$; see e.g. \cite[Theorem 1.44]{IK}.  
	
	The next result states that polynomials in the same $\operatorname{GL}_2(\R)$-orbit define isomorphic algebras.  First some notation.  Let $A\in\operatorname{GL}_2(\R)$ be a $2\times 2$ real invertible matrix, and define the associated \emph{linear change of coordinates map} 
	$$\psi=\psi_A\colon Q\rightarrow Q, \ \psi_A(F)(X,Y)=F\left((X,Y)\cdot A\right).$$
	This also determines an \emph{adjoint map} 
	$$\psi^*=\psi^*_A\colon R\rightarrow R, \ \psi^*(f)(x,y)=f\left((x,y)\cdot A^T\right)$$
	where $A^T$ is the transpose of $A$.  Note that both $\phi$ and $\phi^*$ are algebra automorphisms.  
	
	%The following elementary but useful result says that a polynomial obtained from $F$ by a linear change of coordinates defines an isomorphic algebra.  Although this fact is certainly well known in the literature on Macaulay duality, see e.g. \cite[Sections 1.3,2.3]{IK} or \cite[Lemma 1.5.1]{MS}, we include a proof here for the sake of completeness. 
	The following is well known, e.g. \cite[Lemma 4.1]{HessWron}.
	\begin{fact}
		\label{lem:MDchange}
		For each $A\in\operatorname{GL}_2(\R)$, the maps $\psi=\psi_A\colon Q\rightarrow Q$ and $\psi^*=\psi^*_A\colon R\rightarrow R$ satisfy 
		\begin{equation}
			\label{eq:psiStar}
			\psi\left(\psi^*(f)\circ F\right)=f\circ \psi(F), \ \forall f\in R, \ \forall F\in Q.
		\end{equation}
		In particular, the map $\psi^*\colon R\rightarrow R$ passes to an isomorphism of the quotient algebras
		$$\overline{\psi^*}\colon \frac{R}{\Ann(\psi(F))}\rightarrow \frac{R}{\Ann(F)}.$$
	\end{fact}
	The next result seems to have been first proved by Iarrobino \cite[Theorem 2.9]{IMem} although it has been rediscovered and reproved many times over.
	\begin{fact}
		\label{fact:Tony}
		Given $A=A_F=\R[x,y]/\Ann(F)$ as above, there exists a non-empty Zariski open set of linear forms $\ell\in A_1$ for which the multiplication maps are isomorphisms:
		$$\times\ell^{d-2i}\colon A_i\rightarrow A_{d-i}, \ 0\leq i\leq \flo{d}.$$
	\end{fact}
	The property of \Cref{fact:Tony} that multiplication by powers of a linear form is an isomorphism is sometimes referred to as the \emph{strong Lefschetz property}, and it has been extensively studied over the past several decades; see \cite{HMMNWW} and the references therein.  We will discuss further properties and generalizations of these multiplication maps below. 
	\subsection{Hodge-Riemann relations}
	Let $A=A_F$ be as above.  Given a linear form $\ell\in A_1$, and an index $i$, $0\leq i\leq \flo{d}$, define its $i^{th}$ \emph{Lefschetz multiplication map} by 
	$$\times\ell^{d-2i}\colon A_i\rightarrow A_{d-i}$$
	and define its $i^{th}$ \emph{primitive subspace} as the kernel,
	$$P_{i,\ell}=\ker\{\times\ell^{d-2i+1}\colon A_i\rightarrow A_{d-i+1}\}.$$
	The primitive subspaces form a graded vector subspace of $A$ that lives in degrees $0\leq i\leq \flo{d}$; for degrees $i>\flo{d}$, we define $P_{i,\ell}$ to be the zero subspace.  Using the Lefschetz multiplication maps in conjunction with the intersection pairing on $A$, define the $i^{th}$ \emph{Lefschetz form} by 
	$$(-,-)_i^\ell\colon A_i\times A_i\rightarrow\R, \ (\alpha,\beta)_i^\ell=(\ell^{d-2i}\alpha \beta)\circ F;$$
	it is a symmetric bilinear form on the vector space $A_i$.
	We say that the pair $\left(A_F,\ell\right)$ satisfies the \emph{(ordinary) Hodge-Riemann relations up to degree $i$} (\emph{ordinary HRR$_i$}) if the $j^{th}$ Lefschetz form $(-,-)_j^\ell$ is $(-1)^j$-definite on the $j^{th}$ primitive subspace $P_{j,\ell}$ for all degrees $j$, $0\leq j\leq i$, i.e. 
	\begin{equation}
		\label{eq:OHR}
	(-1)^j\cdot (\alpha,\alpha)_j^\ell=(-1)^j\cdot \left(\ell^{d-2i} \alpha^2\right)\circ F>0, \ \begin{cases}
		\forall 0\neq \alpha\in P_{j,\ell}\\
		\forall 0\leq j\leq i\\
		\end{cases}
	\end{equation}	
	For any subset of nonzero linear forms $U\subset A_1$, we say that $A$ satisfies the \emph{ordinary Hodge-Riemann relations up to degree $i$ on $U$} (\emph{ordinary HRR$_i(U)$}) if $(A_F,\ell)$ satisfies ordinary HRR$_i$ for each $\ell\in U$.
	
	Given a $(d+1)$-tuple of linear forms $\mathcal{L}=(\ell_0,\ell_1,\ldots,\ell_d)\in\left(A_1\right)^{d+1}$, define its $i^{th}$ \emph{mixed Lefschetz multiplication maps} by 
	$$\times\ell_1\cdots\ell_{d-2i}\colon A_i\rightarrow A_{d-i}$$
	and define its $i^{th}$ \emph{mixed Lefschetz form}
	$$\left(-,-\right)^\mathcal{L}_i\colon A_i\times A_i\rightarrow \R, \ \left(\alpha,\beta\right)_i^\mathcal{L}=\left(\ell_1\cdots\ell_{d-2i}\alpha\beta\right)\circ F$$
	and its $i^{th}$ \emph{mixed primitive subspace}
	$$P_{i,\mathcal{L}}=\ker\{\times\ell_0\ell_1\cdots\ell_{d-2i}\colon A_i\rightarrow A_{d-i+1}\}.$$
	We say that the pair $\left(A_F,\mathcal{L}\right)$ satisfies the \emph{mixed Hodge-Riemann relations up to degree $i$} (\emph{mixed HRR$_i$}) if the $j^{th}$ mixed Lefschetz form $(-,-)^\mathcal{L}_j$ is non-degenerate on $A_j$ and $(-1)^j$-definite on the $j^{th}$ mixed primitive subspace $P_{j,\mathcal{L}}$ for all degrees $j$, $0\leq j\leq i$, i.e. 
	\begin{equation}
		\label{eq:MHR}
		(-1)^j\cdot (\alpha,\alpha)_j^\mathcal{L}=(-1)^j\cdot \left(\ell_1\cdots\ell_{d-2j}\alpha^2\right)\circ F>0, \ \begin{cases}\forall 0\neq \alpha\in P_{j,\mathcal{L}}\\
		\forall 0\leq j\leq i\\
		\end{cases}
	\end{equation}
		
	For any subset of nonzero linear forms, $U\subset A_1$, we say that $A_F$ satisfies the \emph{mixed Hodge-Riemann relations up to degree $i$ on $U$} (\emph{mixed HRR$_i(U)$}) if $(A_F,\mathcal{L})$ satisfies mixed HRR$_i$ for all $\mathcal{L}\in U^{d+1}$. 
	
	Note that the conditions in \Cref{eq:OHR,eq:MHR} are vacuous for values of $j$ in which the primitive subspace is trivial.  In general, if $A_F$ satisfies mixed HRR$_i(U)$, or more generally if the $j^{th}$ mixed Lefschetz maps from $U$ are isomorphisms for $0\leq j\leq i$, then for any $\mathcal{L}\in U$, its primitive subspaces must satisfy $$\dim_{\R}\left(P_{j,\mathcal{L}}\right)=h_j-h_{j-1};$$
	see \cite[Lemma 2.11, 2.12]{MMS}.  In particular, it follows from \Cref{eq:Hilbert} that the primitive subspaces can only live in degrees $i$ for $0\leq i\leq s-1\leq \flo{d}$ where $s$ is the Sperner number defined above.  Moreover, one can show that if $A_F$ satisfies ordinary or mixed HRR$_i(U)$ for $i\geq s-1$, then it must also satisfy HRR$_{\flo{d}}(U)$ by default, since for $j\geq s$ the primitive subspaces are trivial.  Finally note that, by definition, ordinary or mixed HRR$_i(U)$ always implies ordinary or mixed HRR$_{i-1}(U)$.  
	
	It is clear that for any subset $U$ and for any degree $i$, $0\leq i\leq \flo{d}$, mixed HRR$_i(U)$ implies ordinary HRR$_i(U)$.  Indeed, for any $\ell\in U$, take $\mathcal{L}=(\ell,\ell,\ldots,\ell)\in U^{d+1}$.  Then mixed HRR$_i$ for $(A_F,\mathcal{L})$ is equivalent to ordinary HRR$_i$ for $(A_F,\ell)$.  In general, the converse is not true however; see \Cref{ex:210} below.  On the other hand, we have the following beautiful result of E. Cattani \cite[Theorem 4.4]{Cattani}. %(although he works in the more general context of ``Hodge-Lefschetz modules'').
	\begin{fact}[Cattani 2008]
		\label{thm:Cattani0}
		If $U$ is a convex cone, then $A_F$ satisfies ordinary HRR$_{\flo{d}}(U)$ if and only if it satisfies mixed HRR$_{\flo{d}}(U)$. 
	\end{fact}
	For simplicity, we shall restrict our notion of convex cone to mean either an \emph{open cone} defined as the positive span of finitely many vectors or a \emph{closed punctured cone} defined as the nonnegative span of finitely many vectors, excluding zero.
	%, we mean a set of nonzero linear forms closed under addition and scaling by positive constants, and generated by finitely many linear forms; these are sometimes also called convex polyhedral cones. 	
	We shall sometimes abuse notation slightly and refer to a convex cone of linear forms $U$ as a subset of $R_1$ and also its image $U$ in the quotient $A_1$.
	 
	The following example is \cite[Example 2.10]{MMS}, included here for completeness; it shows that the condition that $U$ is a convex cone in \Cref{thm:Cattani0} cannot be dropped.
	\begin{example}
		\label{ex:210}
		Let $F=X^3+Y^3$, with oriented Artinian Gorenstein algebra 
		$$A=A_F=\frac{\R[x,y]}{\Ann(X^3+Y^3)}=\frac{\R[x,y]}{(xy,x^3-y^3)}$$
		of socle degree $d=3$ and Hilbert function $H(A)=(1,2,2,1)$.
		Given a general linear form $\ell=ax+by$, we compute its primitive subspaces in degrees $i=0$ and $i=1=\flo{d}$:
		\begin{align*}
			P_{0,\ell}= & \ker\{\times\ell^4\colon A_0\rightarrow A_4=0\}= \langle 1\rangle\\
			P_{1,\ell}= & \ker\{\times\ell^{2}\colon A_1\rightarrow A_3\}=  \langle\alpha=b^2x-a^2y\rangle
		\end{align*}
		Then the ordinary HRR give the inequalities
		\begin{align*}
			\langle 1,1\rangle^\ell_0= & \ell^3\circ F=6(a^3+b^3)> 0\\
			(-1)\cdot \langle \alpha,\alpha\rangle^\ell_1= & \ell\alpha^2\circ F=-6ab(a^3+b^3)>0.
		\end{align*}
		Therefore we see that $A$ satisfies ordinary HRR$_{\flo{d}}(U)$ where $U$ is the disjoint union of two open convex cones
		\begin{align*}
			\mathcal{C}_1= & \left\{ax+by \ | \ 0<-a<b\right\}\\
			\mathcal{C}_2= & \left\{ax+by \ | \ 0<-b<a\right\}.
			\end{align*}
		On the other hand, for mixed HRR, given four general linear forms $\ell_i=a_ix+b_iy$, $0\leq i\leq 3$, with $\mathcal{L}=(\ell_0,\ell_1,\ell_2,\ell_3)\in R_1^{4}$, its primitive subspace in degree $i=1$ is
		$$P_{1,\mathcal{L}}=\ker\{\times\ell_0\ell_1\colon A_1\rightarrow A_0\}=\langle \alpha= b_0b_1x-a_0a_1y\rangle$$
		and the mixed HRR give the inequalities
		\begin{align*}
			\langle 1,1\rangle^{\mathcal{L}}_0= & \ell_1\ell_2\ell_3\circ F= 6(a_1a_2a_3+b_1b_2b_3)>0\\
			(-1)\cdot\langle \alpha,\alpha\rangle^{\mathcal{L}}_1= & -\ell_1\alpha^2\circ F=-6a_1b_1(b^2_0b_1+a_0^2a_1)>0
		\end{align*} 
		Note that $A$ satisfies mixed HRR$_{\flo{d}}(\mathcal{C}_j)$ for each $j=1,2$, individually.  On the other hand, taking for instance, $\ell_1=\ell_2=\ell_3=-x+2y\in\mathcal{C}_1$ and  $\ell_0=2x-y\in\mathcal{C}_2$, shows that $A$ does not satisfy mixed HRR$_{\flo{d}}(U)$.		
	\end{example}
	
	We will also see in \Cref{ex:Cattanii} that \Cref{thm:Cattani0} does not hold in general if we replace HRR$_{\flo{d}}$ with HRR$_i$, for $i<\flo{d}$.  In fact, to get from ordinary HRR$_i(U)$ to mixed HRR$_i(U)$ for $i<\flo{d}$, we need another condition, as in \Cref{thm:IntroKarp}; see \Cref{rem:Cattani}.

	%Neither of the conditions, that $U$ is a convex cone, nor, that $i=\flo{d}$, can be dropped in \Cref{thm:Cattani0}, as the following examples show.
	%convex cone condition on $U$ in \Cref{thm:Cattani0} cannot be dropped, see e.g. \cite[Example 2.10]{MMS}.  Moreover, in general, the HRR$_{\flo{d}}$ cannot be replaced by HRR$_i$ for $i<\flo{d}$, e.g. \cite[Example 5.1]{MMS}.

	\begin{remark}
		\label{rem:Cattani0}
		We remark that Cattani's original statement in \cite[Theorem 4.4]{Cattani} is in the much more general setting of ``Hodge-Lefschetz modules'', which includes, in particular, Artinian Gorenstein algebras in an arbitrary number of variables, and even more general modules over such algebras.  His proof relies on deep results from the theory of variations of Hodge structure, in particular the ``Descent Lemma'' of Cattani, Kaplan and Schmid \cite[Lemma 1.16]{CKS}. 
	\end{remark}%Cattani's original proof of \Cref{thm:Cattani} relies on some deep results from the theory of variations of Hodge structure.  As we shall see, our \Cref{thm:IntroCattani} is equivalent to a special case of Cattani's theorem, and the proof we give here can be regarded as an elementary proof of Cattani's theorem in this special case.
	     
	The following is an immediate consequence of \Cref{lem:MDchange}; for details see \cite[Corollary 4.13]{MMS}.  
	\begin{fact}
		\label{lem:U1U2}
		Let $\psi\colon Q\rightarrow Q$ be a linear change of coordinates map, with adjoint map $\psi^*\colon R\rightarrow R$ as in \Cref{lem:MDchange}.  For any subsets $U_1,U_2\subset R_1$, if $A_F$ satisfies the ordinary, respectively mixed, HRR$_i(U_1)$ and $\psi^*(U_2)\subset U_1$, then, $A_{\psi(F)}$ must satisfy the ordinary, respectively mixed, HRR$_i(U_2)$.
	\end{fact}
	
	In light of \Cref{lem:U1U2}, and since any two (open or closed punctured) convex cones in the plane can be mapped bijectively onto one another by an invertible linear transformation, it suffices to prove \Cref{thm:Cattani0} for one fixed convex cone $U$.
	For the remainder of this paper, we fix $U=\mathcal{K}_{>0}\subset R_1$ to be the open cone of strictly positive linear forms
	\begin{equation}
		\label{eq:K}
	\mathcal{K}_{>0}=\left\{ax+by \ | \ a,b>0\right\}\cong \R^2_{>0}
\end{equation}
	and its punctured closure $\mathcal{K}_{\geq 0}^*\subset R_1$ is the cone of nonnegative and nonzero linear forms
	\begin{equation}
		\label{eq:Kstar}
		\mathcal{K}_{\geq 0}^*=\left\{ax+by \ | \ a,b\geq 0, \ (a,b)\neq (0,0)\right\}\cong \R^2_{\geq 0}\setminus\{(0,0)\}.
		\end{equation} 
	
	%In \cite{MMS}, we called a polynomial $F$ such that $A_F$ satisfies mixed HRR$_i(U)$ \emph{$i$-Lorentzian}, and \emph{strictly $i$-Lorentzian} if it satisfies mixed HRR$_i(\overline{U})$, with $U$ and $\overline{U}$ above. 
	\subsection{Higher Hessians}
	Given $F$ as above, and $i$ satisfying $0\leq i\leq \flo{d}$, take $\mathcal{E}=\bigsqcup_{i\geq 0}\mathcal{E}_i$, where $\mathcal{E}_i$ is the standard ordered monomial basis of $R_i$, $\mathcal{E}_i=\{e_p^i=x^py^{i-p} \ | \ 0\leq p\leq i\}$, and define the \emph{$i^{th}$ Hessian matrix of $F$ (with respect to $\mathcal{E}$)} to be the $(i+1)\times (i+1)$ polynomial matrix 
	\begin{align}
		\label{eq:Hessian}
		\Hess_i(F)=  \Hess_i(F,\mathcal{E})= & \left((e_p^i\cdot e^i_q)\circ F\right)_{0\leq p,q\leq i}\\
		\nonumber= & \left(\frac{\partial^{2i}F}{\partial X^{p+q}\partial Y^{2i-p-q}}\right)_{0\leq p,q\leq i}.
	\end{align} 
	The entries of $\Hess_i(F)$ are homogeneous polynomials of degree $d-2i$, and for any real values of the variables $X$ and $Y$, it is a real symmetric matrix.  Note that for $i=0$, the Hessian is just $F$ itself.  Also, note that for $0\leq i\leq s(F)-1$, $\Ann(F)_i=\{0\}$ and hence $\mathcal{E}_i$ passes to a basis for the quotient algebra $A_F$.	The Hessian matrix of $F$ has a natural interpretation in terms of the Lefschetz multiplication maps in $A_F$.  The following is well known, e.g. \cite[Lemma 3.4]{MMS}.
	\begin{fact}
		\label{lem:HessMat}
		Given a homogeneous form $F\in Q_d$ and its associated oriented Artinian Gorenstein algebra $A=A_F$ with Sperner number $s=s(F)$, let $\ell=ax+by\in R_1$ be any linear form and fix $i$, $0\leq i\leq s-1$.  Then we have 
		$$\Hess_i(F,\mathcal{E})|_{(a,b)}=\frac{1}{(d-2i)!}\left((e^i_p,e^i_q)_i^\ell\right)_{0\leq p,q\leq \flo{d}}.$$ 
		In other words, the $i^{th}$ Hessian matrix, evaluated at $X=a$ and $Y=b$, is equal to the matrix for the $i^{th}$ Lefschetz form with respect to the basis $\mathcal{E}_i\subset A_i$, or equivalently, it is the matrix of the $i^{th}$ Lefschetz multiplication map
		$$\times\ell^{d-2i}\colon A_i\rightarrow A_{d-i}$$
		with respect to the basis $\mathcal{E}_i$ and its dual basis $\mathcal{E}_i^*$.  
		%be given Fix $F\in\R[X,Y]_d$, and let $A=A_F=\R[x,y]/\Ann(F)$ be its associated oriented Artinian Gorenstein algebra.  For any linear form $\ell=ax+by\in R_1$, and for any $0\leq i\leq \flo{d}$ the matrix for the multiplication map
		%$$\times\ell^{d-2i}\colon A_i\rightarrow A_{d-i}$$
		%with respect to the monomial basis $\mathcal{E}_i=\left\{x^py^{i-p} \ | \ 0\leq p\leq i\right\}\subset A_i$ and its dual basis $\mathcal{E}_i^*\subset A_{d-i}$  is equal to the $i^{th}$ Hessian matrix evaluated at $X=a$ and $Y=b$, i.e. $\operatorname{Hess}_i(F)|_{(a,b)}$.  Equivalently, the $i^{th}$ Hessian matrix, evaluated at the point $(a,b)$, is the matrix for the $i^{th}$ Lefschetz form with respect to $\ell$ $\left(-,-\right)^\ell_i\colon A_i\times A_i\rightarrow \R$ with respect to the standard ordered monomial basis in degree $i$, $\mathcal{E}_i$. 
	\end{fact}
	
	Using \Cref{eq:Hessian}, we can also define the $i^{th}$ Hessian matrix of $F$ with respect to any other homogeneous basis $\mathcal{F}=\sqcup_{i\geq 0}\mathcal{F}_i$,  where $\mathcal{F}_i=\{f^i_p \ | \ 0\leq p\leq i\}\subset R_i$.  In this case the two matrices are related by the usual change of basis formula:
	\begin{equation}
		\label{eq:change}
		\operatorname{Hess}_i(F,\mathcal{F})=M^T\cdot\operatorname{Hess}_i(F,\mathcal{E})\cdot M
	\end{equation}
	where $M=(m_{pq})_{0\leq p,q\leq i}$ is the invertible (real constant) matrix where $f^i_q=\sum_{p=0}^im_{pq}e^i_p$, and $M^T$ is its transpose.  As a special case, consider a linear change of coordinates $\psi\colon Q\rightarrow Q$ with its adjoint map $\psi^*\colon R\rightarrow R$, and let $\psi\left(\Hess_i(F,\mathcal{E})\right)$ be the matrix whose entries are $\psi$ applied to the entries of $\Hess_i(F,\mathcal{E})$.  Then according to \Cref{eq:psiStar}, we have 
	\begin{equation}
		\label{eq:HessianCOC}
		\psi\left(\Hess_i(F,\psi^*\left(\mathcal{E}\right))\right)=\Hess_i(\psi(F),\mathcal{E}).
	\end{equation}

	Define the \emph{$i^{th}$ Hessian polynomial of $F$} by 
\begin{equation}
	\label{eq:HessPoly}
	H^F_i=H^F_i(X,Y)=(-1)^{\flo{i+1}}\det\left(\Hess_i(F)\right).
\end{equation}
	Of course one could also define the $i^{th}$ Hessian polynomial with respect to any other homogeneous basis $\mathcal{F}$ by 
	$$H^{F,\mathcal{F}}_i(X,Y)=(-1)^{\flo{i+1}}\cdot \det\left(\Hess_i(F,\mathcal{F})\right),$$
	and according to \Cref{eq:change}, $H^F_i$ and $H^{F,\mathcal{F}}_i$ would only differ by a \emph{positive} constant multiple.  In particular, given a linear change of coordinates $\psi\colon Q\rightarrow Q$ with adjoint map $\psi^*\colon R\rightarrow R$, \Cref{eq:HessianCOC} implies 
	\begin{equation}
		\label{eq:HessDetCOC}
		\psi\left(H^F_i(X,Y)\right)=c_i(\psi)^2\cdot H^{\psi(F)}_i(X,Y)
	\end{equation}
	where $c_i(\psi)$ is the determinant of the matrix for restriction of $(\psi^*)^{-1}$ to $R_i$ with respect to the basis $\mathcal{E}_i$.

	The strange sign on the right hand side of \Cref{eq:HessPoly} is justified by the following result; see \cite[Lemma 3.11]{MMS}:
	\begin{fact}
		\label{lem:HRR}
		Fix a homogeneous form $F\in Q_d$ of degree $d$ with Sperner number $s=s(F)$, fix $i$, $0\leq i\leq \flo{d}$ and let $\mathcal{K}_{>0}$ and $\mathcal{K}_{\geq 0}^*$ be as in \Cref{eq:K,eq:Kstar}.  Then 
		\begin{enumerate}
		\item $A_F$ satisfies ordinary HRR$_i\left(\mathcal{K}_{\geq 0}^*\right)$ if and only if 
		$$H_j^F(X,Y)>0, \ \begin{cases} \forall (X,Y)\geq 0, \ (X,Y)\neq (0,0)\\
			\forall 0\leq j\leq \min\{i,s-1\}\\ \end{cases}$$
			
		\item $A_F$ satisfies ordinary HRR$_i\left(\mathcal{K}_{>0}\right)$ if and only if 
		$$H_j^F(X,Y)>0, \ \begin{cases} \forall (X,Y)>0\\
			\forall 0\leq j\leq \min\{i,s-1\}.\\ \end{cases}$$
		\end{enumerate}
	\end{fact}

	\begin{remark}
		\label{rem:Hess}
		Note that for $i\geq s$, the polynomial $H_i^F$ must be identically zero, due to the fact that for $i\geq s$, the monomials $\mathcal{E}_i=\left\{x^py^{i-p} \ | \ 0\leq p\leq i\right\}$ satisfy a dependence relation in the quotient $\left(A_F\right)_i$.  Moreover, note that for $0\leq i\leq s-1$, $H^F_i$ is not identically zero by \Cref{fact:Tony} and \Cref{lem:HessMat}.  In particular, this gives an alternative characterization of the Sperner number:
		$$s(F)=\min\{i \ | \ H^F_i\equiv 0\}.$$

	\end{remark}
	
	\subsection{Toeplitz Matrices}
	Given a homogeneous $d$-form $F\in Q_d$, write $$F=\sum_{k=0}^d\binom{d}{k}c_kX^kY^{d-k},$$
	and define the \emph{$i^{th}$ Toeplitz matrix of $F$}, $\phi^i(F)$, to be the $(i+1)\times (d-i+1)$ Toeplitz matrix formed by the normalized coefficient sequence of $F$, i.e. 
	\begin{equation}
		\label{eq:phi}
		\phi^i_d(F)=\left(\begin{array}{cccc} c_i & c_{i+1} & \cdots & c_d\\
		\vdots & \ddots & \ddots & \vdots\\
		c_0 & c_1 & \cdots & c_{d-i}\\ \end{array}\right)=\left(c_{i+q-p}\right)_{\substack{0\leq p\leq i\\ 0\leq q\leq d-i\\}}.
	\end{equation}
	According to \cite[Lemma 4.5]{MMS}, for $0\leq i\leq \flo{d}$, the rank of $\phi^i_d(F)$ satisfies   
	\begin{equation}
		\label{eq:rank}
		\operatorname{rank}\left(\phi^i_d(F)\right)=\min\{i+1,s(F)\}=\dim_\R\left(\left(A_F\right)_i\right).
	\end{equation}
	The matrix $\phi^i_d(F)$ is closely related to the catalecticant matrix of $F$ in \cite[Definition 1.2]{IK}.
	\begin{remark}
		\label{rem:Sperner3}
		Note that \Cref{eq:rank} gives yet another characterization of the Sperner number:
		$$s(F)=\max\left\{\operatorname{rank}\left(\phi^i_d(F)\right) \ | \ 0\leq i\leq \flo{d}\right\}.$$
	\end{remark}

	Recall that a matrix is totally nonnegative, respectively totally positive, if all of its minor determinants are nonnegative, respectively positive.  %Moreover we say that a Toeplitz matrix $\phi^i_d(F)$ is \emph{strongly} totally nonnegative if $\phi^j_d(F)$ is totally nonnegative for every $0\leq j\leq i$.
	The following result was proved in \cite[Theorems 1,2]{MMS,Errata}:
	\begin{fact}
		\label{lem:mixedHRR}
		Fix a homogeneous form $F\in Q_d$ of degree $d$ and Sperner number $s=s(F)$, and fix $i$, $0\leq i\leq \flo{d}$.  Then 
		\begin{enumerate}
			\item $A_F$ satisfies mixed HRR$_i\left(\mathcal{K}_{\geq 0}^*\right)$ if and only if $\phi^i_d(F)$ is totally positive.
			
			\item $A_F$ satisfies mixed HRR$_i\left(\mathcal{K}_{>0}\right)$ if and only if $\phi^i_d(F)$ is totally nonnegative.
		\end{enumerate}
	\end{fact}
	
	In the language of \cite{MMS,Errata}, polynomials $F$ satisfying \Cref{lem:mixedHRR}(1) are called strictly $i$-Lorentzian and their limits, called $i$-Lorentzian polynomials, are characterized by \Cref{lem:mixedHRR}(2).  %We shall use the language of Toeplitz matrices, instead of higher Lorentzian polynomials, from here on.
	
	\begin{remark}
		\label{rem:Cattani}
		\begin{enumerate}
			\item From \Cref{lem:HRR} and \Cref{lem:mixedHRR}, we obtain the following rephrasing of \Cref{thm:IntroCattani}:  
			
			``For $U=\mathcal{K}_{\geq 0}^*$ or $U=\mathcal{K}_{>0}$, $A_F$ satisfies ordinary HRR$_{\flo{d}}(U)$ if and only if it satisfies mixed HRR$_{\flo{d}}(U)$.''
			
			From this reformulation, and \Cref{lem:U1U2}, it follows that \Cref{thm:IntroCattani} is equivalent to \Cref{thm:Cattani0}.
			
			\item A similar rephrasing of \Cref{thm:IntroKarp} is 
			
			``For $U=\mathcal{K}_{\geq 0}^*$ or $U=\mathcal{K}_{>0}$, and for fixed $i$, $0\leq i<s-1$, if $A_F$ satisfies ordinary HRR$_{i}(U)$ and the $i^{th}$ Hessian polynomial $H_i^F$ has all real negative roots, then $A_F$ must also satisfy mixed HRR$_i(U)$.''
			\Cref{ex:Cattanii} below shows that in general, one should not expect ordinary HRR$_i(U)$ to imply mixed HRR$_i(U)$ for $i<\flo{d}$ without some additional hypotheses, such as real rooted-ness of the Hessian polynomial. 
		\end{enumerate}     
	\end{remark} 
	
	\begin{example}
		\label{ex:Cattanii}
		Let 
		$$F=Y^4+12XY^3+12X^2Y^2+8X^3Y+X^4.$$
		Then its Sperner number $s(F)=3=\flo{d}+1$, and its $i=1$ Hessian polynomial is 
		$$H^F_1=144(7Y^4 + 8XY^3 -X^2Y^2 + 2X^3Y + 2X^4).$$
		Since $F=H^F_0>0$ and $H^F_1>0$ on $\left(\R^2_{\geq 0}\right)^*$, it follows from \Cref{lem:HRR} that the oriented Artinian Gorenstein algebra $A_F$ satisfies ordinary HRR$_1\left(\mathcal{K}_{\geq 0}^*\right)$.  On the other hand, note that the Toeplitz matrix 
		$$\phi^1_4(F)=\left(\begin{array}{cccc} 3 & 2 & 2 & 1\\ 1 & 3 & 2 & 2\\ \end{array}\right)$$
		is not totally positive, and hence, by \Cref{lem:mixedHRR}, $A_F$ does not satisfy mixed HRR$_1\left(\mathcal{K}_{\geq 0}^*\right)$.  The problem here is evidently that $s-1=2$ and the $2^{nd}$ Toeplitz matrix for $F$
		$$\phi^2_4(F)=\left(\begin{array}{ccc} 2 & 2 & 1\\ 3 & 2 & 2\\ 
		1 & 3 & 2\\ \end{array}\right)$$
		has determinant $\det(\phi^2_4(F))=-5$, and hence is not totally positive.  It should also be noted that the $1^{st}$ Hessian polynomial equation 
		$$H_1^F(X,1)=2X^4+2X^3-X^2+8X+7=0$$
		has only two real solutions, not four.		
	\end{example}
	%Using \Cref{lem:HRR} and \Cref{lem:mixedHRR}, \Cref{thm:Cattani} can be restated in terms of Hessian polynomials and Toeplitz matrices:
	%\begin{theorem}
	%	\label{thm:Cattani2}
	%	Let $F\in Q_d$ be any homogeneous $d$-form of Sperner number $s$.  Then 
	%	\begin{enumerate}
	%		\item $\phi^{s-1}_d(F)$ is totally positive if and only if 
	%		$$H_i^F(X,Y)>0, \ \begin{cases}\forall (X,Y)\in\R^2_{\geq 0}\setminus\{(0,0)\}\\
	%			\forall 0\leq i\leq s-1\\ \end{cases}$$
			
	%		\item $\phi^{s-1}_d(F)$ is totally nonnegative if and only if 
	%		$$H_i^F(X,Y)>0, \ \begin{cases}\forall (X,Y)\in\R^2_{> 0}\\
	%			\forall 0\leq i\leq s-1\\ \end{cases}$$
%		\end{enumerate}
%	\end{theorem}
	
	\subsection{Pl\"ucker expansion of the Hessian}
	The following is a useful formula that expresses the Hessian polynomials in terms of the Pl\"ucker coordinates of the matrix $\phi^i_d(F)$.  First we need some notation.  Let $A\in\mathcal{M}(m,n)$ ($=$ the set of real $m\times n$ matrices) for some $1\leq m\leq n$, and for any $1\leq k\leq m$ and any $k$-subsets $I,J\in\binom{[n]}{m}$, let $A_{IJ}$ denote the $k\times k$ submatrix whose rows and columns are the rows and columns of $A$ indexed by $I$ and $J$, respectively, and let 
	$$\Delta_{IJ}(A)=\det(A_{IJ})$$
	denote the corresponding minor.  If $k=m$, then $I=[m]$ and we write this maximal (initial) minor as $\Delta_J(A)$.  These maximal minors $$\left\{\Delta_J(A) \ | \ J\in\binom{[n]}{m}\right\}$$ 
	are called the \emph{Pl\"ucker coordinates of the matrix $A$}, since they uniquely determine the point in projective space representing the equivalence class of $A$ as a point $[A]\in\operatorname{Gr}_m(\R^n)$ in the Grassmannian of $m$-planes in $\R^n$; see \cite[Section 2]{Karp} and references therein for further details.
	
	Given an $m$-subset $J=\{1\leq i_1<\cdots<i_m\leq n\}$, let $\lambda(J)=(\lambda_1\geq \cdots\geq \lambda_m)$ denote the partition with parts defined by 
	$$\lambda_{k}=i_{m-k+1}-(m-k+1).$$
	Note that $\lambda(J)$ is a partition with part sizes at most $n-m$ and number of (nonzero) parts at most $m$; in other words $\lambda(J)$ is a partition whose Young diagram fits inside a rectangle of size $m\times (n-m)$.  In fact, it is a basic combinatorial fact that $\lambda$ defines a bijective map from the set of $m$-subsets of $[n]$ to the set of partitions of length $m$ and parts of size at most $n-m$, i.e. partitions that fit inside the $m\times (n-m)$ rectangle.  Let
	$\lambda'(J)=(\lambda_1'\geq\cdots\lambda_{n-m}')$ denote the conjugate partition, with parts defined by 
	$$\lambda'_{k}=\# \{\lambda_i\geq k\}.$$
	Then $\lambda'(J)$ fits inside the conjugate $(n-m)\times m$ rectangle.
	Define the number  
	$$N'_J=\prod_{1\leq i<j\leq n-m}\frac{\lambda_i'-\lambda_j'- i+j}{j-i};$$
	Then for each $J\in\binom{[n]}{m}$, $N'_J$ is a positive integer which counts the number of semi-standard Young tableaux of shape $\lambda'(J)$ using the numbers in $[n-m]$.  Let $D_i=(i+1)(d-2i)$, and denote the size of the partition by $$|\lambda(J)|=\lambda_1+\cdots+\lambda_m=\lambda_1'+\cdots+\lambda_{n-m}'=|\lambda'(J)|.$$
	The following can be deduced from \cite[Lemma 4.19]{MMS}; see also \cite[Proposition 3.3]{HessWron}.	
	\begin{fact}
		\label{lem:Plucker}
		For any homogeneous $d$-form $F\in Q_d$ with Sperner number $s=s(F)$, and for any $i$ satisfying $0\leq i\leq s-1$, the $i^{th}$ Hessian polynomial of $F$ satisfies
		$$H^F_i(X,Y)=\left(\frac{d!}{(d-2i)!}\right)^{i+1}\cdot \sum_{J\in\binom{[d-i+1]}{i+1}}N'_J\cdot \Delta_J\left(\phi^i_d(F)\right)\cdot X^{|\lambda(J)|}\cdot Y^{D_i-|\lambda(J)|}.$$ 
	\end{fact} 
	
	\begin{remark}
		\label{rem:Wronskian}
		Earlier incarnations of \Cref{lem:Plucker} have appeared in the work of I. Gessel \cite[Theorem 16]{Gessel}.		
		A formula for the Wronskian, analogous to the one in \Cref{lem:Plucker}, has appeared in the paper of K. Purbhoo \cite[Proposition 2.3]{Purbhoo}; see also \cite[Proposition 2.11]{Karp}.  We will see (\Cref{fact:WronHess1}) that the Hessian is really a special case of the Wronskian, as first pointed out by Pasch \cite{Pasch} in 1874; for further details and proofs, we refer the reader to \cite{HessWron}.  See \Cref{ex:H3} for an example using \Cref{lem:Plucker}.  
	\end{remark}

	\subsection{Topology of Toeplitz matrices}
	Let $\mathcal{T}(m,n)$ denote the set of $m\times n$ Toeplitz matrices, as a linear subspace of the Euclidean space $\mathcal{M}(m,n)$, the space of $m\times n$ real matrices.  %As a topological space, we endow $\mathcal{T}(m,n)$ with the subspace topology, which makes it a Euclidean space.  
	Then \Cref{eq:phi} defines a linear isomorphism 
	$$\phi^i_d\colon Q_d\rightarrow \mathcal{T}(i+1,d-i+1).$$
	Endowing $Q_d\cong\R^{d+1}$ with the Euclidean topology makes $\phi^i_d$ into a homeomorphism of topological spaces.
	
	Denote by $\mathcal{T}(m,n)^{>0}$ the open subset of totally positive $m\times n$ Toeplitz matrices and by $\mathcal{T}(m,n)^{\geq 0}$ the closed subset of totally nonnegative ones.  %Define  $\mathcal{T}(m,n)_s\subset\mathcal{T}(m,n)$ to be the subset of Toeplitz matrices of Sperner number $s$, and for any subset $\mathcal{A}\subset\mathcal{T}(m,n)$, we shall denote by $\mathcal{A}_s$ the intersection subset $\mathcal{A}_s=\mathcal{A}\cap \mathcal{T}(m,n)_s$.
	% with $\mathcal{T}(m,n)_s^{>0}=\mathcal{T}(m,n)^{>0}\cap \mathcal{T}(m,n)_s$ and $\mathcal{T}(m,n)_s^{\geq 0}=\mathcal{T}(m,n)^{\geq 0}\cap \mathcal{T}(m,n)_s$.  %We shall define $\mathcal{T}(m,n)_s^{>0}\subset\mathcal{T}(m,n)_s^{>0}$ to be the subset of totally positive Toeplitz matrices if $s\geq m$, or if $s<m$ let it denote the set of $m\times n$ Toeplitz matrices of Sperner number $s$ whose minor determinants up to size $s$ are all positive (and of course all larger determinants must be zero).   %We let $\mathcal{T}(m,n)^{\geq\geq 0}\subset\mathcal{T}(m,n)^{\geq 0}$ denote the closed subset of strongly totally nonnegative $m\times n$ Toeplitz matrices.
	The next result is \cite[Theorem 4.21]{MMS,Errata}, and we refer the reader there for its proof.
	\begin{fact}
		\label{fact:Closure}
		The closure of the set of totally positive Toeplitz matrices is the set of totally nonnegative Toeplitz matrices, i.e.
		$$\overline{\mathcal{T}(m,n)^{>0}}=\mathcal{T}(m,n)^{\geq 0}$$
	\end{fact}
	
	We shall make use of the following fundamental property of Toeplitz matrices, which is a direct consequence \Cref{lem:mixedHRR}.
	\begin{fact}
		\label{lem:Inherit}
		For any homogeneous $d$-form $F\in Q_d$, and for any nonnegative integer $i$, with $0\leq i\leq \flo{d}$, if $\phi^{i}_d(F)$ is totally positive, respectively nonnegative, then $\phi^j_d(F)$ is also totally positive, respectively nonnegative, for every $j$, with $0\leq j\leq i$.
	\end{fact}
	
	Given $A\in\mathcal{M}(m,n)$, and $1\leq k\leq \min\{m,n\}$ define a \emph{corner minor of size $k$} to be the determinant of any $k\times k$ submatrix formed by either the first $k$-rows and the last $k$-columnn or the the last $k$-rows and the first $k$-columns.
   	Define a \emph{consecutive minor of size $k$} to be the determinant of a $k\times k$ submatrix formed by some consecutive subsets of rows and columns of $A$, i.e. 
   	$$\Delta_{IJ}(A), \ \text{where} \ \begin{cases} I=\{i,i+1,\ldots,i+k-1\} & \text{for some} \ i\\ J=\{j,j+1,\ldots,j+k-1\} & \text{for some} \ j\\ \end{cases}.$$
   	%we say that the minor is \emph{initial} if it is consecutive and at least one of the index sets $I$ or $J$ contains the first index.  Note that in a Toeplitz matrix, every consecutive minor is also necessarily initial.  
   	Finally, we define a \emph{maximal minor} of $A$ to be the determinant of any $k\times k$ submatrix where $k=\min\{m,n\}$.
   	
   	\begin{remark}
   		\label{rem:Errata}
   		The totally positive case of \Cref{lem:Inherit} can be deduced directly from properties of Toeplitz matrices, together with the following well known criterion of Fekete, e.g. \cite[Corollary 3.1.5]{FJ}:
   		
   		``A matrix is totally positive if and only if every consecutive minor is positive.''
   		
   		\noindent The totally nonnegative case of \Cref{lem:Inherit} can then be deduced directly from \Cref{fact:Closure} using a straightforward limiting argument.
   		
   		In our original paper \cite{MMS}, we had tacitly assumed \Cref{lem:Inherit} and used it to prove \Cref{lem:mixedHRR} and from that also \Cref{fact:Closure}, resulting in a rather unfortunate circular argument.  This circular argument was repaired in \cite{Errata}, where we give yet another proof of \Cref{lem:Inherit} that is independent of both \Cref{lem:mixedHRR} and \Cref{fact:Closure}. %circular line of reasoning, used That the total positivity of $\phi^i_d(F)$ implies the total positivity of $\phi^j_d(F)$ for all $0\leq j\leq i$ is an easy consequence of Fekete's criterion for totally positive matrices and basic properties of Toeplitz matrices.  That the same holds for totally nonnegative Toeplitz matrices is a deeper property that is, in a certain sense, logically equivalent to \Cref{fact:Closure}; in fact, in our original paper \cite{MMS}, we had used this property, without proof, to obtain \Cref{fact:Closure}.  Later, in a corrigendum \cite{Errata}, we gave an independent proof, using symmetric functions and the Littlewood-Richardson rule. 
   	\end{remark}
   	
   	We shall use the following fact, which is \cite[Theorem 3.1.12]{FJ}, and we refer the reader there for its proof.
   	\begin{fact}
   		\label{fact:FJ}
   		A matrix $A\in\mathcal{M}(m,n)$ is totally positive if and only if $A$ is totally nonnegative and all of its corner minors and all of its consecutive maximal minors are nonzero (positive).
   	\end{fact}
   	\Cref{fact:FJ} has the following topological interpretation.  For positive integers $m,n$, define the open subset $\mathcal{O}(m,n)\subset\mathcal{T}(m,n)$ consisting of $m\times n$ Toeplitz matrices, all of whose corner minors of size $k\leq \min\{m,n\}$ are nonzero and all of whose consecutive maximal minors are nonzero.  %Note that if $s<\min\{m,n\}$, the set $\mathcal{O}(m,n)_s$ is empty.  
   	%For our purposes, we will assume that $m\leq n$ so that $m=\min\{m,n\}$.
   	%The following lemma is the key to our results.
   	\begin{lemma}
   		\label{lem:ConnComp}
   		For any positive integers $m,n$, we have 
   		$$\mathcal{T}(m,n)^{>0}=\mathcal{T}(m,n)^{\geq 0}\cap \mathcal{O}(m,n).$$
   		In particular, $\mathcal{T}(m,n)^{>0}$ is a union of connected components of the subspace $\mathcal{O}(m,n)$.
   	\end{lemma}
   	\begin{proof}
   		For the first statement, the containment $\subseteq$ is clear and the reverse containment is a direct consequence of \Cref{fact:FJ}.  For the last statement, note that since $\mathcal{T}(m,n)^{>0}$ is open in the ambient space $\mathcal{M}(m,n)$, it is also open in the subspace $\mathcal{O}(m,n)$.  Moreover, since $\mathcal{T}(m,n)^{\geq 0}$ is closed in $\mathcal{M}(m,n)$, it follows that from the equality that $\mathcal{T}(m,n)^{>0}$ is also closed in the subspace $\mathcal{O}(m,n)$.  Therefore $\mathcal{T}(m,n)^{>0}$ is an open and closed subset of $\mathcal{O}(m,n)$, and hence the result follows from elementary point-set topology.   
   	\end{proof}
   	
   	\begin{remark}
   		\label{rem:Lusztig}
   		\Cref{lem:ConnComp} plays a similar role in our proof of \Cref{thm:IntroCattani}, as an analogous theorem of Lusztig \cite[Proposition 8.14]{Lusztig} plays in Karp's proof of his Wronskian criterion for total positivity in the flag variety; see \cite[Proposition 2.6 and Proof of Theorem 1.1(ii)]{Karp}.   
   	\end{remark}
   	
   	%Define an initial minor of size $k$ to be the determinant of any $k\times k$ submatrix of $A$ formed by consecutive row and columns, where at least one of the index sets is $[k]=\{1,\ldots,k\}$, i.e.
   	%$$\Delta_{[k],\{j+1,\ldots,j+k\}}(A), \ \text{or} \ \Delta_{\{i+1,\ldots,i+k\},[k]}(A).$$
   	
   	The next result will help us determine when the Toeplitz matrix $\phi^{s-1}_d(F)$ lies in the distinguished open set $\mathcal{O}(s,n)$.  This is where we need the assumption that $s$ is equal to the Sperner number of $F$.
   	\begin{lemma}
   	\label{lem:S}
   	Let $F\in Q_d$ be a homogeneous $d$-form of Sperner number $s=s(F)$.  If $A_F$ satisfies ordinary HRR$_{s-1}\left(\mathcal{K}_{\geq 0}^*\right)$, then for every $r$, $0\leq r\leq d-2(s-1)$, the multiplication map 
   	$$\times y^{d-r-2(s-1)}\colon \left(A_{x^r\circ F}\right)_{s-1}\rightarrow\left(A_{x^r\circ F}\right)_{d-r-(s-1)}$$
   	is an isomorphism. In particular, the consecutive maximal minors of $\phi^{s-1}_d(F)$ are all nonzero. 
   	\end{lemma}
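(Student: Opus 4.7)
The plan is to exploit the fact that the closed cone $\overline U$ contains both $x$ and $y$, so that HRR$_{s-1}(\overline U)$ yields Lefschetz isomorphisms at both of these linear forms, and then to transfer these isomorphisms to $A_{x^r\circ F}$ via the canonical surjection $\pi\colon A_F\twoheadrightarrow A_{x^r\circ F}$ induced by $\Ann(F)\subseteq\Ann(x^r\circ F)$.

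First, applying \Cref{lem:HRR} at $(X,Y)=(0,1)$ and $(X,Y)=(1,0)$, together with \Cref{rem:Hess}, forces both top-degree Lefschetz maps $\times y^{d-2(s-1)}$ and $\times x^{d-2(s-1)}\colon (A_F)_{s-1}\to (A_F)_{d-(s-1)}$ to be isomorphisms. Each factors as a $d-2(s-1)$-fold composition of single-step multiplications through the plateau $[s-1,d-(s-1)]$ of the Hilbert function, where every graded piece has dimension $s$; since an isomorphism composition between equal-dimensional spaces forces each factor to be an isomorphism, $\times y$ and $\times x$ are both isomorphisms between any two consecutive plateau degrees. Consequently, for any $r\in[0,d-2(s-1)]$, both $\times x^r\colon (A_F)_{s-1}\to (A_F)_{s-1+r}$ and $\times y^{d-r-2(s-1)}\colon (A_F)_{s-1+r}\to (A_F)_{d-(s-1)}$ are isomorphisms, since their source and target degrees all lie in the plateau.

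To transfer these to $A_{x^r\circ F}$, I would note that the kernel of $\pi$ in degree $k$ is precisely $\ker(\times x^r\colon (A_F)_k\to (A_F)_{k+r})$, so $\pi$ itself is an isomorphism in any degree where $\times x^r$ is. By the previous step $\pi$ is iso at degrees $s-1$ and $d-r-(s-1)$, identifying $(A_{x^r\circ F})_{s-1}\cong (A_F)_{s-1+r}$ and $(A_{x^r\circ F})_{d-r-(s-1)}\cong (A_F)_{d-(s-1)}$. Under these identifications, multiplication by $y^{d-r-2(s-1)}$ on $A_{x^r\circ F}$ corresponds to the same multiplication on $A_F$, which is an isomorphism by the previous step; this proves the first assertion.

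For the ``in particular'' claim, the above identification forces $\dim (A_{x^r\circ F})_{s-1}=s$, which together with the obvious bound $s(x^r\circ F)\leq s(F)=s$ gives $s(x^r\circ F)=s$. By \Cref{rem:Hess}, $\Hess_{s-1}(x^r\circ F)$ then represents the Lefschetz map for $\ell=Xx+Yy\in (A_{x^r\circ F})_1$ up to a nonzero scalar, so at $(X,Y)=(0,1)$ it becomes the matrix of the now-known iso $\times y^{d-r-2(s-1)}$ and hence has nonzero determinant. A direct evaluation using $(x^ay^b\circ F)|_{(0,1)}=\frac{d!}{(d-a-b)!}c_a$ identifies $H^{x^r\circ F}_{s-1}(0,1)$ as a positive scalar multiple of the $(r+1)$-st contiguous maximal $s\times s$ minor of $\phi^{s-1}_d(F)$; as $r$ varies over $[0,d-2(s-1)]$ these exhaust all contiguous maximal minors. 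The main obstacle is not conceptual but the bookkeeping required to verify compatibility of multiplication under $\pi$ and to track the scalar and sign constants in the correspondence between Hessian values and contiguous minors.
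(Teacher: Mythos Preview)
Your proposal is correct and follows essentially the same route as the paper's proof: both arguments use that $x,y\in\overline U$ to get that $\times x^{d-2(s-1)}$ and $\times y^{d-2(s-1)}$ are isomorphisms on $(A_F)_{s-1}$, factor through the Hilbert-function plateau to deduce that each single step $\times x$ and $\times y$ is an isomorphism there, and then transfer the resulting $\times y^{d-r-2(s-1)}$ isomorphism from $A_F$ to $A_{x^r\circ F}$ via the inclusion $\Ann(F)\subseteq\Ann(x^r\circ F)$. The only difference is packaging: the paper does a direct element-chase (lift a kernel element $\alpha$ to $A_F$, show $x^r\alpha$ dies under an injective $\times y$-power, hence $x^r\alpha=0$, hence $\alpha=0$), whereas you phrase the same computation structurally by naming the surjection $\pi$, identifying $\ker\pi=\ker(\times x^r)$, and transporting the isomorphism through a commuting square. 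Your explicit verification that $s(x^r\circ F)=s$ before invoking \Cref{rem:Hess} is a detail the paper leaves implicit.
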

   	\begin{proof}
   	Since $s=s(F)$ is the Sperner number of $F$, we have $\dim_\R\left(A_F\right)_{j}=s$ for all degrees $j$ satisfying $s-1\leq j\leq d-s+1$, and hence since the multiplication map 
   	$$\times y^{d-2(s-1)}\colon \left(A_F\right)_{s-1}\rightarrow\left(A_F\right)_{d-s+1}$$
   	is an isomorphism (by HRR$_{\flo{d}}\left(\mathcal{K}_{\geq 0}^*\right)$, since $y\in\mathcal{K}_{\geq 0}^*$), it follows that the multiplication maps 
   	$$\times y\colon\left(A_F\right)_j\rightarrow\left(A_F\right)_{j+1}$$
   	must also be isomorphisms for all $s-1\leq j\leq d-s$.  Next, if $$\alpha\in\ker\left\{\times y^{d-r-2(s-1)}\colon\left(A_{x^r\circ F}\right)_{s-1}\rightarrow\left(A_{x^r\circ F}\right)_{d-r-(s-1)}\right\}$$
   	then it follows that $x^r\cdot \alpha$ must be in the kernel of the injective multiplication map
   	$$\times y^{d-r-2(s-1)}\colon\left(A_F\right)_{s+r-1}\rightarrow\left(A_F\right)_{d-r-(s-1)}.$$
   	Hence $x^r\alpha=0$ in $A_F$, but since $x\in \mathcal{K}_{\geq 0}^*$ too, it follows that $\alpha=0$ in $A_F$, and hence the map above is injective as claimed.  For the last assertion, note that the $(s-1)^{st}$ Hessian polynomial for $x^r\circ F$ evaluated at $(0,1)$ satisfies 
   	$$H^{x^r\circ F}_{s-1}(0,1)=\lambda\cdot \det\left(\begin{array}{ccc} c_{s-1+r} & \cdots & c_{2s-2+r}\\
   		\vdots & \ddots & \vdots\\
   		c_r & \cdots & c_{s-1+r}\\ \end{array}\right)$$
   	where $\lambda$ is a positive nonzero constant and the determinant is the $(r-1)^{st}$ $s\times s$ maximal consecutive minor of $\phi^{s-1}_d(F)$, which, according to \Cref{lem:HessMat}, is nonzero, by the first statement.
   	\end{proof}
   	
   	\subsection{Proof of \Cref{thm:IntroCattani}}
   	\label{sec:Proof}
   	We are now in a position to prove \Cref{thm:IntroCattani}.  

\begin{proof}[Proof of \Cref{thm:IntroCattani}]
	First we prove (1).  Assume $\phi^{s-1}_d(F)$ is totally positive.  Then, according to \Cref{lem:Inherit}, so is $\phi^i_d(F)$ for each $0\leq i\leq s-1$.  It follows from the Pl\"ucker formula in \Cref{lem:Plucker}, that for each $0\leq i\leq s-1$, the $i^{th}$ Hessian $H^F_i(X,Y)$ has positive coefficients, and therefore we have 
	$$H^F_i(X,Y)>0, \ \begin{cases}(X,Y)\geq 0, \ (X,Y)\neq (0,0)\\ \forall 0\leq i\leq s-1\\ \end{cases}.$$ 
	For the converse implication, assume the Hesssian criterion holds.  Then in particular, for each $0\leq i\leq s-1$, the leading coefficients of the $i^{th}$ Hessian polynomial must be positive.  According to our Pl\"ucker formula, the leading coefficients of $H^F_i(X,Y)$ (corresponding to the empty partition and the full rectangular partition) are exactly the maximal corner minors of $\phi^i_d(F)$, which, in turn, are exactly the corner minors of $\phi^{s-1}_d(F)$ of size $i+1$.  Moreover, according to \Cref{lem:HRR} our Hessian criterion is equivalent to the graded oriented Artinian Gorenstein algebra $A_F$ satisfying ordinary HRR$_{s-1}\left(\mathcal{K}_{\geq 0}^*\right)$.  Hence according to \Cref{lem:S}, it follows that the maximal consecutive minors of $\phi^{s-1}_d(F)$ are also nonzero.  In particular, we have shown that our Hessian criterion on $F$ implies that its Toeplitz matrix lies in our ``distinguished open subspace'', i.e. $\phi^{s-1}_d(F)\in\mathcal{O}(s,d-s+2)$.  It remains to show that $\phi^{s-1}_d(F)$ lies in the totally positive component.  Following an idea of Karp, define the one-parameter family of linear change of coordinates
	$$\psi_t\colon Q\rightarrow Q, \ \psi_t(F)(X,Y)=F(X+tY,Y), \ t\in\R.$$
	Then for each $t\in\R$, we have an algebra isomorphism
	$$\overline{\psi_t^*}\colon A_{\psi_t(F)}\rightarrow A_{F}.$$
	In particular, the Sperner number $s(\psi_t(F))=s(F)$ is independent of $t$, and also, for each $t\geq 0$, $\psi_t^*\left(\mathcal{K}_{\geq 0}^*\right)\subset\mathcal{K}_{\geq 0}^*$.  It follows from \Cref{lem:U1U2} that since $A_F$ satisfies the ordinary HRR$_{s-1}\left(\mathcal{K}_{\geq 0}^*\right)$, $A_{\psi_t(F)}$ also satisfies ordinary HRR$_{s-1}\left(\mathcal{K}_{\geq 0}^*\right)$.  Hence, by our previous arguments, we must have 
	$$\phi^{s-1}_d\left(\psi_t(F)\right)\in\mathcal{O}(s,d-s+2), \ \forall t\geq 0.$$
	Furthermore, note that for $t>>0$ sufficiently large, every linear form in the cone $$\psi_t\left(\mathcal{K}_{\geq 0}^*\right)=\left\{ax+(b+t)y \ | \ a,b\geq 0, \ (a,b)\neq (0,0)\right\}$$
	can be made arbitrarily close to the line spanned by the linear form $\ell=y$.  Therefore, by a straightforward deformation argument, e.g. \cite[Corollary 4.15]{MMS}, we can deduce that for $t>>0$ sufficiently large, the algebra $A_{\psi_t(F)}$ satisfies mixed HRR$_{s-1}\left(\mathcal{K}_{\geq 0}^*\right)$, which is equivalent to $\phi_d^{s-1}\left(\psi_t(F)\right)\in\mathcal{T}(s,d-s+2)^{>0}$ by \Cref{lem:mixedHRR}.  Thus, since $t\mapsto\phi^{s-1}_d\left(\psi_t(F)\right)$ defines a continuous path in the space of Toeplitz matrices which, for $t\geq 0$, lies in our distinguished open subspace $\mathcal{O}(s,d-s+2)$, and for $t>>0$, lies in the totally positive component $\mathcal{T}(s,d-s+2)^{>0}$, it follows from \Cref{lem:ConnComp} that $\phi^{s-1}_d(F)=\phi^{s-1}_d(\psi_0(F))$ must lie in the same connected component, and hence $\phi^{s-1}_d(F)\in\mathcal{T}(s,d-s+2)^{>0}$ too.
	
	For (2), assume first that $\phi^{s-1}_d(F)$ is totally nonnegative.  Then by \Cref{lem:Inherit}, $\phi^i_d(F)$ is totally nonnegative for each $0\leq i\leq s-1$, and it follows from our Pl\"ucker formula that our Hessian criterion
	$$H^F_i(X,Y)>0, \ \begin{cases}\forall (X,Y)>0\\ \forall 0\leq i\leq s-1\\ \end{cases}$$
	is satisfied.  Conversely, assuming the above Hessian criterion holds, it follows from \Cref{lem:HRR} that the algebra $A_F$ satisfies the ordinary HRR$_{s-1}\left(\mathcal{K}_{>0}\right)$.  Consider the one-parameter family of linear change of coordinates
	$$\theta_t\colon Q\rightarrow Q, \ \theta_t(F)(X,Y)=F(X+tY,tX+Y), \ t\in\R, \ t\neq -1,1$$
	and the corresponding algebra isomorphisms
	$$\overline{\theta_t^*}\colon A_{\theta_t(F)}\rightarrow A_F, \ t\in\R, t\neq -1,1.$$
	Since, for each $0< t<1$, $\theta_t^*\left(\mathcal{K}_{\geq 0}^*\right)\subset \mathcal{K}_{>0}$, it follows from \Cref{lem:U1U2} that $A_{\theta_t^*(F)}$ satisfies ordinary HRR$_{s-1}\left(\mathcal{K}_{\geq 0}^*\right)$, and therefore, by our previous argument, that $\phi^{s-1}_d(\theta_t(F))$ is totally positive.  Since $\phi^{s-1}_d(F)=\lim_{t\to 0}\phi^{s-1}_d(\theta_t(F))$, it follows that $\phi^{s-1}_d(F)$ must be totally nonnegative.
\end{proof}
	\begin{remark}
		\label{rem:Karp}
		The method of proof employed in the proof of \Cref{thm:IntroCattani} above, is essentially the same method that Karp used to establish his Wronskian criterion for totally positive flags \cite[Theorem 1.1]{Karp}.  In fact, it is a  remarkable coincidence that the argument above applies almost verbatim if we replace ``Hessian'' with ``Wronskian'' and ``Toeplitz matrix'' with ``flag''.  The connection between ``Hessian'' and ``Wronskian'' will be made clear below (\Cref{fact:WronHess1}), but the connection between ``Toeplitz matrix'' and ``flag'' is still somewhat mysterious.
	\end{remark} 
   	
   	%We remark that it would be interesting to reconcile the proof of \Cref{thm:Cattani3} with Cattani's original proof.  Here are two potentially interesting questions that we do not have answers to:
   	%\begin{enumerate}
   	%	\item how is the space of Toeplitz matrices $\mathcal{T}_s^{>0}$ related to the classifying space of the variations of polarized Hodge structure for the Hodge numbers $h^{p,p}=h_p(A_F)$, $0\leq p\leq d$?
   		  
   	%	\item what interpretations do the nilpotent orbit theorem or the $\mathfrak{sl}_2$ orbit theorem have in the Toeplitz matrix context?
   %\end{enumerate}	
   
   %In another vein, inspired by similar results for totally nonnegative flag varieties and grassmannians, one could ask:
   %\begin{enumerate}[resume]
   	%\item Is the set $\mathcal{T}^{>0}_s$ homeomorphic to an open disk (of dimension $2s$?)? Is $\mathcal{T}^{\geq 0}$ a CW complex?
   %\end{enumerate}	
   \section{Wronskians and Proof of \Cref{thm:IntroKarp}}	
   \label{sec:Schubert}
   %In order to prove \Cref{thm:IntroKarp}, we need a replacement for \Cref{lem:S} that will guarantee that the maximal consecutive minors of $\phi^{r-1}_d(F)$ are non-zero.  This condition is closely related to a  real-rootedness condition on the Wronskian that implies membership in the totally positive Grassmannian. 
   \subsection{Wronskians and Totally Positive Grassmannians}
   In this section, it will be useful to work over the complex numbers.  Fix positive integers $r,s$ satisfying $r\leq s+1$, and consider a $r$-dimensional subspace $V\subset\C[X,Y]_s$ in the $(s+1)$-dimensional vector space of homogeneous $s$-forms.  It will be convenient to think of $V\in \operatorname{Gr}_r(\C[X,Y]_s)$ as a point in the Grassmannian of $r$-planes in the $(s+1)$-dimensional ambient space of homogeneous $s$-forms.  We say that $V$ is real if it has a basis of real homogeneous $s$-forms; in this case we shall write $V\in\operatorname{Gr}_r(\R[X,Y]_s)\subset\operatorname{Gr}_r(\C[X,Y]_s)$.  
   Given an arbitrary subspace $V\in\operatorname{Gr}_r(\C[X,Y]_s)$ and a basis $F_1,\ldots,F_r\in V$, where $F_i=\sum_{j=0}^sa_{ij}X^jY^{s-j}$ for some complex numbers $a_{ij}\in\C$, we can represent $V$ by the $r\times (s+1)$ complex matrix 
   $$M_V=\left(a_{ij}\right)_{\substack{1\leq i\leq r\\ 0\leq j\leq s\\}}.$$
   Then $M_V$ is uniquely determined up to a left $\operatorname{GL}_r(\C)$-action, and $V$ is real if and only if it has a matrix representative $M_V$ whose entries are real numbers. 
   For each $r$-subset $I=\{0\leq i_1<\cdots<i_r\leq s\}\subset\binom{[s]_0}{r}$, denote by $\Delta_I(M_V)$ the determinant of the $r\times r$ submatrix formed from the $r$-columns of $M_V$ indexed by $I$.  The set of all such maximal minor determinants of $M_V$ are called the Pl\"ucker coordinates of $V$ in $\operatorname{Gr}_r(\C[X,Y]_s)$; by multiplicativity of determinants, they are independent of the choice matrix representative, and they uniquely determine $V$ as a point in the Grassmannian under its Pl\"ucker embedding.  We say that a real subspace $V\in\operatorname{Gr}_r(\R[X,Y]_s)$ is a point of the totally positive, respectively nonnegative, Grassmannian if all of its Pl\"ucker coordinates can be taken to be positive, respectively nonnegative; in that case, we write
   $$V\in\operatorname{Gr}_r(\R[X,Y]_s)^{>0}, \ \text{respectively} \ V\in\operatorname{Gr}_r(\R[X,Y]_s)^{\geq 0}.$$
   Given a subspace $V\in\operatorname{Gr}_r\left(\C[X,Y]_s\right)$ and a basis $\{F_1,\ldots,F_r\}\subset V$, we define its \emph{homogeneous Wronskian} by the formula
   $$W(F_1,\ldots,F_r;X,Y)=\frac{1}{Y^{\frac{r(r-1)}{2}}}\cdot \det\left(\left(\frac{\partial^{i-1}F_j}{\partial X^{i-1}}\right)_{1\leq i,j\leq r}\right).$$
   One can show that $W(F_1,\ldots,F_r;X,Y)$ is indeed a nonzero homogeneous polynomial of degree $N\coloneqq r(s+1-r)$, and it is independent of the choice of basis for $V$ up to scaling by a nonzero constant, e.g. \cite[Lemma 2.2]{HessWron}; sometimes as a shorthand we write $W(V;X,Y)$.  Note that the specialization $Y\mapsto 1$ yields the familiar non-homogeneous Wronskian of the de-homogenized forms, i.e.
   $$W(F_1,\ldots,F_r;X,1)=\operatorname{Wron}(F_1(X,1),\ldots,F_r(X,1)).$$
   One can further show that the homogeneous Wronskian gives a well-defined map 
   $$[W]\colon\operatorname{Gr}_r\left(\C[X,Y]_s\right)\rightarrow\P\left(\C[X,Y]_N\right)$$
   which associates to every subspace $V\in\operatorname{Gr}_r(\C[X,Y]_s)$ a well defined projective class of homogeneous $N$-form, denoted by $\left[W(V;X,Y)\right]\in\P\left(\C[X,Y]_N\right)$; in fact, according to Purbhoo \cite[Theorem 1.1]{Purbhoo}, this is a finite flat morphism of schemes.
   
   In \cite[Theorem 1.3, Proposition 1.12]{KP}, S. Karp and K. Purbhoo have obtained the following Wronskian criterion for membership in the totally positive/nonnegative Grassmannian.
   \begin{fact}
   	\label{fact:KP}
   	Let $V\in\operatorname{Gr}_r(\C[X,Y]_s)$ be any $r$-dimensional subspace of $s$-forms, and let $W(V;X,Y)$ be any representative of the projective class of its homogeneous Wronskian.  Then we have the following implications.
   	\begin{enumerate}
   		\item If $W(V;X,1)$ has only real and negative roots, then $V$ is totally positive, i.e. $$V\in\operatorname{Gr}_r(\R[X,Y]_s)^{>0}.$$
   		\item If $W(V;X,1)$ has only real and nonpositive roots, then $V$ is totally nonnegative, i.e. $$V\in\operatorname{Gr}_r(\R[X,Y]_s)^{\geq 0}.$$
   	\end{enumerate}
   \end{fact}
   
   \subsection{Wronskians versus Hessians}
   Set $d=r+s-1$.  Fix a real homogeneous $d$-form $F\in\R[X,Y]_d$ with Sperner number $s(F)$ satisfying $r\leq s(F)$, and define the $r$-dimensional real subspace
   $V^F_r\in\operatorname{Gr}_r(\R[X,Y]_s)$ by
   $$V^F_r=\operatorname{span}_\R\left\{\left.F_i=\frac{\partial^{r-1}F}{\partial X^{r-i}\partial Y^{i-1}}\right| 1\leq i\leq r\right\}$$
   (note that the condition $r\leq s(F)$ implies $V^F_r$ is $r$-dimensional).   
   Write $W(V^F_r;X,Y)$ as a shorthand for the homogeneous Wronskian $W(F_1,\ldots,F_r;X,Y)$.  The following result, which was already known to Pasch in 1874 \cite{Pasch}, says that the homogeneous Wronskian of $V_r^F$ and the $(r-1)^{st}$ Hessian of $F$ are projectively equivalent.  For a proof, the reader is referred to Pasch's original paper \cite[page 179- 180]{Pasch}, or, for a combinatorial interpretation, see \cite[Proposition 3.4]{HessWron}.
   \begin{fact}
   	\label{fact:WronHess1}
   	Define 
   	$$c(r,s)=\prod_{i=0}^{r-1}\frac{(s-r+1)!}{(s-i)!}.$$
   	Then we have 
   	$$W(V^F_r;X,Y)=c(r,s)\cdot H^F_{r-1}(X,Y).$$
   \end{fact}
   
   \begin{corollary}
   	\label{cor:HessKarp}
   	For any homogeneous $d$-form $F\in Q_d$ of Sperner number $s(F)$, and for any $r$ satisfying $r\leq s(F)$, we have the following implications:
   	\begin{enumerate}
   		\item If $H^F_{r-1}(X,1)$ has only real and negative roots, then 
   		$$\phi^{r-1}_d(F)\in\operatorname{Gr}_r\left(\R[X,Y]_{d-r+1}\right)^{>0}.$$ 
   		\item If $H^F_{r-1}(X,1)$ has only real and nonpositive roots, then 
   		$$\phi^{r-1}_d(F)\in\operatorname{Gr}_r\left(\R[X,Y]_{d-r+1}\right)^{\geq 0}.$$
   	\end{enumerate}
   \end{corollary}
   \begin{proof}
   	For (1), note that if $H^F_{r-1}(X,1)$ has all real negative zeros, then, according to \Cref{fact:WronHess1}, so does $W(V_r^F;X,Y)$, and hence by \Cref{fact:KP}, we have $$V_r^F\in\operatorname{Gr}_r\left(\R[X,Y]_{d-r+1}\right)^{>0}.$$  
   	If $F=\sum_{k=0}^d\binom{d}{k}c_kX^kY^{d-k}$, then we have 
   	$$F_{i}=\frac{\partial^{r-1}F}{\partial X^{r-i}\partial Y^{i-1}}=(d)_{r-1}\cdot\sum_{k=0}^{d-r+1}\binom{d-r+1}{k}c_{k+r-i}X^kY^{d-r+1-k}$$
   	where $(d)_{r-1}=d(d-1)\cdots (d-(r-1)+1)$, and the corresponding matrix representative for $V=V^F_{r}$ is the product of the Toeplitz matrix $\phi^{r-1}_d(F)$ and a diagonal binomial matrix, i.e.
   	\begin{align*}
   		M_V= & (d)_{r-1}\cdot \left(\binom{d-r+1}{k}c_{k+r-i}\right)_{\begin{array}{l}1\leq j\leq r\\ 0\leq k\leq d-r+1\\ \end{array}}\\
   		= & \phi^{r-1}_d(F)\cdot\left(\delta_{k,m}\cdot \binom{d-r+1}{k}\cdot(d)_{r-1}\right)_{0\leq k,m\leq d-r+1}.
   	\end{align*}
   	Since $V\in\operatorname{Gr}_r(\R[X,Y]_{d-r+1})^{>0}$, it follows that the maximal minors of $M_V$ must all be non-zero and have the same sign, and hence, the same must hold for the maximal minors of $\phi^{r-1}_d(F)$.  The argument for (2) is similar. 
   \end{proof}
   It is important to note that \Cref{cor:HessKarp} is only a statement about the maximal minors of a matrix; in particular, it does not assert that the matrix is totally positive or nonnegative.
   \begin{example}
   	\label{ex:H3}
   	Take $F=2X^3+3X^2Y-Y^3$ so that 
   	$$\Hess_1(F)=\left(\begin{array}{rr} 12X+6Y & 6X\\ 
   		6X & -6Y\\ \end{array}\right)$$
   	and 
   	$$H^F_1=36(X+Y)^2.$$
   	Note that for $r=s=2$, we have
   	$$W(V_2^F;X,Y)=\frac{1}{Y}\det\left(\left(\begin{array}{ll} 6X^2+6XY & 3X^2-3Y^2\\
   	12X+6Y & 6X\\ \end{array}\right)\right)=18(X+Y)^2$$
   	and hence 
   	$$W(V_2^F;X,Y)=\frac{1!}{2!}\cdot\frac{1!}{1!}\cdot H^F_1(X,Y)$$
   	as predicted by \Cref{fact:WronHess1}.
   	Note that 
   	$$\phi^1_3(F)=\left(\begin{array}{rrr} 0 & 1 & 2\\ -1 & 0 & 1\\ \end{array}\right)\in\operatorname{Gr}_2\left(\R[X,Y]_3\right)^{>0};$$
   	in particular, its maximal minors are all positive as predicted by \Cref{cor:HessKarp}.  Note however that $\phi^1_3(F)$ is not even totally nonnegative.  We can also confirm our Pl\"ucker expansion formula in \Cref{lem:Plucker} for $d=3$ $i=1$:
   	\begin{align*}
   		H^F_1(X,Y)= & \left(\frac{3!}{1!}\right)^2\cdot \sum_{J\in\binom{[3]}{2}}N'_J\cdot\Delta_J(\phi^1_3(F))\cdot X^{|\lambda(J)|}\cdot Y^{2-|\lambda(J)|}\\ %\left(N'_{\{1,2\}}\cdot\Delta_{\{1,2\}}(\phi^1_3(F))\cdot Y^{2}+N'_{\{1,3\}}\cdot\Delta_{\{1,3\}}(\phi^1_3(F))\cdot XY\right.\\
   		%\left. +N'_{\{2,3\}}\cdot \Delta_{\{2,3\}}(\phi^1_3(F))\cdot X^2\right)\\
   		= & 36\cdot \left(1\cdot \left|\begin{array}{rr} 0 & 1\\ -1 & 0\\ \end{array}\right|\cdot Y^2+1\cdot \left|\begin{array}{rr} 0 & 2\\ -1 & 3\\ \end{array}\right|\cdot XY+1\cdot\left|\begin{array}{rr} 1 & 2\\ 0 & 1\\ \end{array}\right|\cdot X^2\right)\\
   		= & 36(X+Y)^2.
   	\end{align*}
   \end{example}
   
   \subsection{Proof of \Cref{thm:IntroKarp}}
   We are now in a position to prove \Cref{thm:IntroKarp}.
   \begin{proof}[Proof of \Cref{thm:IntroKarp}]
   We prove (1).  Our strategy is the same as in the proof of \Cref{thm:IntroCattani}: we first show that our Hessian condition implies the Toeplitz matrix $\phi^{r-1}_d(F)$ lies in our distinguished open set, then that our one-parameter family preserves our Hessian criterion along the nonnegative real axis, and finally for large nonnegative values of the parameter it lands in the totally positive component.  Assume that our Hessian criterion is satisfied, that is, for some $1\leq r\leq s(F)$,
   \begin{align*}
   	H^F_i(X,Y)>0, & \ \ \begin{cases}\forall (X,Y)\geq 0, \ (X,Y)\neq (0,0)\\ \forall 0\leq i\leq r-1\\ \end{cases}, \ \text{and}\\
   	H^F_{r-1}(X,1) & \ \ \text{has only real negative roots}.
   	\end{align*}
   	As above, it follows from our Pl\"ucker formula in \Cref{lem:Plucker} that the leading coefficients of $H^F_i(X,Y)$ for $0\leq i\leq r-1$, which are the corner minors of $\phi^{r-1}_d(F)$, are all positive.  Moreover, since we are assuming $H^F_{r-1}(X,1)$ has only real negative zeros, and that the maximal corner minors of $\phi^{r-1}_d(F)$ are all positive, it follows from \Cref{cor:HessKarp} that all maximal minors of $\phi^{r-1}_d(F)$ must be positive too.  Hence it follows that $\phi^{r-1}_d(F)$ does indeed lie in our distinguished open set, i.e. $\phi^{r-1}_d(F)\in \mathcal{O}(r,d-r+2)$.
   	Next, as in the proof of \Cref{thm:IntroCattani}, consider the one parameter family $\psi_t\colon Q\rightarrow Q, \ \psi_t(F)(X,Y)=F(X+tY,Y)$.  Again, since $A_F$ satisfies ordinary HRR$_{r-1}\left(\mathcal{K}_{\geq 0}^*\right)$, by \Cref{lem:HRR}, and since  $\psi_t^*\left(\mathcal{K}_{\geq 0}^*\right)\subset\mathcal{K}_{\geq 0}^*$ for all $t\geq 0$, it follows from \Cref{lem:U1U2} that $A_{\psi_t(F)}$ also satisfies ordinary HRR$_{r-1}\left(\mathcal{K}_{\geq 0}^*\right)$ for all $t\geq 0$, and hence also satisfies the first part of the Hessian condition.  To see that it also satisfies the second part, we appeal to \Cref{eq:HessDetCOC}, which implies that the univariate polynomial $H^{\psi_t(F)}_{r-1}(X,1)$ has the same roots as $H^F_{r-1}(X+t,1)$.  In particular, if $t\geq 0$, it follows that if $H^F_{r-1}(X,1)$ has all real negative roots, then so does $H^{\psi_t(F)}_{r-1}(X,1)$.  This implies that $$\phi^{r-1}_d\left(\psi_t(F)\right)\in \mathcal{O}(r,d-r+2), \ \forall t\geq 0.$$
   	As above, a straightforward deformation argument implies that for $t>>0$ sufficiently large, $\phi^{r-1}_d\left(\psi_t(F)\right)$ actually lies in the totally positive component, hence by the connected component property in \Cref{lem:ConnComp}, it follows that $\phi^{r-1}_d\left(\psi_t(F)\right)$ must be totally positive for all $t\geq 0$, which proves (1).   
   	
   	To prove (2), we use a limiting argument, again, as in the proof of \Cref{thm:IntroCattani}.  Assume that $F$ satisfies the Hessian condition
   	\begin{align*}
   		H^F_i(X,Y)>0, & \ \begin{cases}\forall (X,Y)>0\\
   		\forall 0\leq i\leq r-1\\ \end{cases}, \ \text{and}\\
   		H^F_{r-1}(X,1) & \ \text{has only real nonpositive roots}.\\ 
   	\end{align*}
   Then for $0<t<1$, the one-parameter family of linear change of coordinates maps $\theta_t\colon Q\rightarrow Q$, $\theta_t(F)(X,Y)=F(X+tY,tX+Y)$ satisfies $\theta_t^*\left(\mathcal{K}_{\geq 0}^*\right)\subset \mathcal{K}_{>0}$, and hence 
   $$H^{\theta_t(F)}_i(X,Y)>0, \ \begin{cases}\forall (X,Y)\geq 0, \ (X,Y)\neq (0,0)\\ \forall 0\leq i\leq r-1\\ \end{cases}$$
   for all $0<t<1$ by \Cref{lem:HRR}.  We claim that for $0<t<1$, $H^{\theta_t(F)}_{r-1}(X,1)$ also has all real negative roots.  More generally, we claim that if $G(X,Y)$ is a homogeneous $d$-form such that $G(X,1)$ has only real nonpositive roots, then the form $G_t(X,Y)=\theta_t(G)(X,Y)=G(X+tY,tX+Y)$ has the property that $G_t(X,1)$ has only real negative roots for all $0<t<1$.  Indeed note that 
   $$G_t(X,1)=(tX+1)^d\cdot G\left(\frac{X+t}{tX+1},1\right)$$
   and hence, if $r_1,\ldots,r_m\in\R$ are the nonpositive roots of $G(X,1)$, then, for $0< t< 1$, $r_1(t),\ldots,r_m(t)\in\R$ are the strictly negative roots of $G_t(X,1)$, where  
   $$r_i(t)=\frac{r_i-t}{1-tr_i}, \ 1\leq i\leq m$$
   (note if $0<t$ and $-\infty<r_i\leq 0$ then $1-tr_i>0$ and $r_i-t<0$.)
   It follows then from our previous argument that $\phi^{r-1}_d\left(\theta_t(F)\right)$ is totally positive for all $0<t<1$, and hence, since $\phi^{r-1}_d(F)=\lim_{t\to 0}\phi^{r-1}_d\left(\theta_t(F)\right)$, $\phi^{r-1}_d(F)$ must be totally nonnegative, as desired. 
   \end{proof}

\end{document}